\documentclass[12pt,reqno]{amsart}
\advance \topmargin by -\headheight
\advance \topmargin by -\headsep
\evensidemargin \oddsidemargin
\newtheorem{theorem}{Theorem}[section]
\newtheorem{lemma}[theorem]{Lemma}

\usepackage{amsmath, amsthm, amssymb, amsfonts}
 \usepackage{hyperref}
\usepackage{enumerate}
\usepackage{url}
\usepackage{dsfont}
\usepackage{mathrsfs}

\usepackage{bm}

\usepackage{xcolor}

\usepackage{fancyvrb}

\newtheorem{Proposition}[theorem]{Proposition}

\newtheorem{Corollary}[theorem]{Corollary}

\theoremstyle{definition}

\newtheorem{example}[theorem]{Example}

\theoremstyle{remark}

\numberwithin{equation}{section}

\renewcommand{\geq}{\geqslant}
\renewcommand{\leq}{\leqslant}

\def \Z {\mathbb{Z}}

\newcommand\1{\mathds{1}}

\title{On an asymmetric additive energy inequality}

\author{Akshat Mudgal}
\address{Mathematics Institute, Zeeman Building, University of Warwick, Coventry CV4 7AL, UK}
\email{Akshat.Mudgal@warwick.ac.uk}

 \subjclass[2020]{11B30} 
\keywords{Additive energies, Discrete midpoint convexity}

\renewcommand\vec{\bm}

\begin{document}
\maketitle

\begin{abstract}
Let $d \geq 1$ be an integer, $G$ be an abelian group and $\nu, w_1, \dots, w_{2d}: G \to [0, \infty)$ be functions with finite, non-empty supports. Define the generalised additive energy
\[ E_{2d, \nu}(w_1, \dots, w_{2d}) = \sum_{y,y' \in G}\sum_{a_1, \dots, a_{2d} \in G } w_1(a_1) \dots w_{2d}(a_{2d}) \nu(y) \nu(y')\1_{\sum_{i=1}^d (a_i - a_{i+d}) = y-y'} .\]
Moreover, for every $1 \leq i \leq 2d$, let $E_{2d, \nu}(w_i) = E_{2d, \nu}(w_i, \dots, w_i)$. A standard Fourier analytic argument delivers the estimate
\[ E_{2d,\nu}(w_1, \dots, w_{2d}) \leq \prod_{1 \leq i \leq 2d} E_{2d, \nu}(w_i)^{1/2d}.\]
In this note, we present a purely combinatorial proof of the above inequality. In particular, our proof does not use any Fourier or spectral analysis and relies on repeated applications of Cauchy--Schwarz inequality combined with a discrete convexity extension type argument.  We also record a variation of this upper bound in the non-abelian setting via spectral inequalities following work of Hatami on graph norms, as well as a relevant sumset analogue obtained via iterative applications of the Pl\"{u}nnecke--Ruzsa inequality.
\end{abstract}

 \section{Introduction}

Given integer $d \geq 1$, some abelian group $G$ and some functions $\nu, w_1, \dots, w_{2d}: G \to [0, \infty)$ with finite, non-empty supports, we define the generalised additive energy
\begin{equation} \label{chicago}
E_{2d, \nu}(w_1, \dots, w_{2d}) = \sum_{y,y' \in G}\sum_{a_1, \dots, a_{2d} \in G } w_1(a_1) \dots w_{2d}(a_{2d}) \nu(y) \nu(y')\1_{\sum_{i=1}^d (a_i - a_{i+d}) = y-y'} .
\end{equation}
For every $1 \leq i \leq 2d$, let $E_{2d, \nu}(w_i) = E_{2d, \nu}(w_i, \dots, w_i)$. In this note, we are interested in bounding $E_{2d, \nu}(w_1, \dots, w_{2d})$ in terms of $E_{2d, \nu}(w_1), \dots, E_{2d, \nu}(w_{2d})$. Such inequalities have frequently appeared in the study of Waring's problem \cite{Wo1992}, Vinogradov's mean value theorem \cite{Wo2016, Wo2019}, the sum-product phenomenon \cite{Bo2005, BC2004, Ch2003, HRNZ2020, Mu2024a, PZ2020} and other related topics \cite{Shk2021}. Most of these works have required such bounds in the case when $G = \mathbb{Z}/p\mathbb{Z}, \mathbb{Z}$ or $\mathbb{Q}^{\times}$ and have relied on Fourier analytic arguments. We record this in considerable generality below.

\begin{theorem} \label{main}
   Let $d \geq 1$ be an integer, let $G$ be an abelian group and let $\nu, w_1, \dots, w_{2d}: G \to [0, \infty)$ be functions with finite, non-empty supports. Then
    \begin{equation} \label{lmd}
    E_{2d, \nu}(w_1, \dots, w_{2d}) \leq \prod_{i=1}^{2d} E_{2d, \nu}(w_i)^{1/2d}. \end{equation}
\end{theorem}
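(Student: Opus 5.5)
Write $f_i = w_i$ for $1 \leq i \leq d$ and $g_i(x) = w_{i+d}(-x)$, so that
\[ E_{2d,\nu}(w_1,\dots,w_{2d}) = \sum_{x} (f_1 * \dots * f_d * g_1 * \dots * g_d)(x)\, (\nu * \tilde\nu)(x), \qquad \tilde\nu(y) = \nu(-y). \]
Our approach is combinatorial: Cauchy--Schwarz, symmetrisation, and a discrete midpoint-convexity argument. We first define the multilinear form
\[ \Lambda(u_1,\dots,u_{2d}) = E_{2d,\nu}(u_1,\dots,u_{2d}). \]
The basic step pairs one ``positive'' slot with one ``negative'' slot. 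Fixing all other variables and writing the constraint as $a_i - y = a_{j+d} - y' + (\text{rest})$, we get an inner product of the shape $\sum_z F(z)H(z)$, where
\[ F(z) = \sum_{a_i - y = z} w_i(a_i)\nu(y), \]
and $H$ is defined analogously. Applying Cauchy--Schwarz over $z$ and over the remaining variables should give
\[ \Lambda(\dots, u_i, \dots, u_{j+d}, \dots) \leq \Lambda(\dots, u_i, \dots, u_i, \dots)^{1/2}\, \Lambda(\dots, u_{j+d}, \dots, u_{j+d}, \dots)^{1/2}. \]
That is, we may replace the pair $(u_i, u_{j+d})$ by two copies of either function, at the cost of a geometric mean.

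Second, we note that $\Lambda$ is symmetric within the first $d$ slots and within the last $d$ slots. It is also invariant under swapping the two halves, which corresponds to $y \leftrightarrow y'$ and reflection. So $\Lambda$ depends only on a multiset configuration. We encode the configuration by an exponent vector $\vec{k} = (k_1,\dots,k_{2d})$ of nonnegative integers summing to $2d$, recording how many slots each $w_i$ occupies, split into positive and negative counts. Take $\phi(\vec{k}) = \log \Lambda$ of the corresponding configuration. The Cauchy--Schwarz step says that $\phi$ is discretely midpoint convex along moves that shift one unit from one function to another, in appropriately balanced configurations. The target inequality is $\phi(\vec 1) \leq \frac{1}{2d}\sum_i \phi(2d\,\vec e_i)$, which says that $\phi$ at the barycenter is at most the average of $\phi$ at the vertices.

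Third, we extend this local convexity to the global Jensen-type bound. The natural route is an iterative smoothing argument, in the style of proofs of H\"older's inequality from Cauchy--Schwarz. Start with $\vec 1$ and repeatedly apply Cauchy--Schwarz to move mass: each application bounds the current configuration by the geometric mean of two configurations that are more concentrated. Iterating produces a bound
\[ \Lambda(\vec 1) \leq \prod_{\vec{k}} \Lambda(\vec{k})^{c_{\vec{k}}}. \]
The weights $c_{\vec{k}}$ are dyadic and form a probability measure with barycenter $\vec 1$, since each step preserves the barycenter.

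To conclude, we either show the process terminates at the pure configurations $2d\,\vec e_i$, or run it infinitely and use a limiting argument. In the limiting argument, the mass on non-pure configurations tends to zero, and all $\Lambda$ values are bounded above and below by positive finite constants because the supports are finite and non-empty. The barycenter constraint then forces the weight on $2d\,\vec e_i$ to be exactly $1/2d$.

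The main obstacle is that Cauchy--Schwarz only pairs a positive slot with a negative slot, so the positive and negative counts of each $w_i$ must stay compatible. We must check that pure configurations, with $d$ copies on each side, are reachable, and that midpoint convexity holds along enough directions. For this we first symmetrise each function to have $d$ positive and $d$ negative copies, and then use Cauchy--Schwarz across the two halves simultaneously. If the direct iteration stalls, the fallback is to define $\phi$ on the lattice of balanced vectors, prove midpoint convexity along all edge directions $\vec e_i - \vec e_j$, and invoke a discrete convex-extension lemma: a midpoint-convex function on such a lattice simplex extends to a convex function on its hull. Jensen's inequality then gives the bound.
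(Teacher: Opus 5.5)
\textbf{There is a genuine gap: the basic Cauchy--Schwarz step is false, and the fallback extension lemma is false as stated.}

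Your basic Cauchy--Schwarz step fails in general, so the smoothing iteration built on it never starts. Take $d=2$, $G=\Z$, $\nu=\1_{\{0\}}$, $w_1=w_4=\1_{\{0\}}$, $w_2=w_3=\1_{\{1\}}$, and pair slot $1$ with slot $3$. Then $\Lambda(w_1,w_2,w_3,w_4)=1$ (from $0+1=1+0$), but $\Lambda(w_1,w_2,w_1,w_4)=0$, so your right-hand side vanishes. Freezing the other slots and applying Cauchy--Schwarz to one positive/negative pair is only guaranteed when the frozen part is itself positive. In Fourier terms, the product of the transforms of the frozen slots times $|\widehat{\nu}|^2$ must be nonnegative, which fails for arbitrary slots. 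The same example refutes your claim that every $\Lambda$ value is bounded below by a positive constant. Your assertion that the dyadic mass on non-pure configurations tends to zero is never justified.

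The Cauchy--Schwarz that is always valid splits the whole equation into two sides, each carrying $d$ weighted variables and one $\nu$-variable: $\sum_x r(x)r'(x)\leq \|r\|_2\|r'\|_2$. The paper uses it once to reduce to $E_{2d,\nu}(w_1,\dots,w_d,w_1,\dots,w_d)$. It then uses it on the balanced counts $\Gamma_{\vec{u}}$, $\vec{u}\in H_d$, which have $u_j$ copies of $w_j$ on each side. For $\vec{z}=(\vec{u}+\vec{v})/2\in H_d$, one keeps $u_i$ of the $2z_i$ copies of $w_i$ on the left and moves the other $v_i$ to the right. One then applies the split inequality and re-signs variables to identify the two square sums with $\Gamma_{\vec{u}}$ and $\Gamma_{\vec{v}}$. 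Your fallback gestures at this but never carries it out.

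The extension lemma you invoke is false as stated. On $H_3$, with $M>0$, put:
\begin{itemize}
\item $f=2M$ at the three vertices;
\item $f=0$ at $(2,1,0),(0,2,1),(1,0,2)$;
\item $f=M$ at $(1,2,0),(0,1,2),(2,0,1)$;
\item $f(1,1,1)=M/2$.
\end{itemize}
Every instance of \eqref{disc} holds. A midpoint with a zero coordinate forces both endpoints onto the same edge, where the values read $2M,0,M,2M$. The three pairs with midpoint $(1,1,1)$ each average to $M/2$. Yet $(1,1,1)$ is the centroid of three lattice points where $f=0$, so $f$ has no convex extension to the hull, and Jensen over arbitrary lattice points is unavailable.

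What is true, and all you need, is the vertex-only bound of Proposition \ref{q1}: $f(\vec{u})\leq \frac1d\sum_i u_i f(d\vec{e}_i)$. Its proof is a maximum principle rather than a convex extension. Subtract the linear interpolant of the vertex values to get $g$ with $g(d\vec{e}_i)=0$. If $\max g>0$, a maximiser $\vec{z}$ is not a vertex. For $i,j\in{\rm supp}(\vec{z})$, the identity $\vec{z}=\frac12[(\vec{z}-\vec{e}_i+\vec{e}_j)+(\vec{z}+\vec{e}_i-\vec{e}_j)]$ forces $\vec{z}-\vec{e}_i+\vec{e}_j$ to be a maximiser as well. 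Iterating drains coordinates until a vertex is reached, which is a contradiction.

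These two ingredients are exactly what your proposal is missing: the split Cauchy--Schwarz on balanced configurations, and this vertex-only convexity lemma.
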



One can prove this very quickly by passing to the dual group via orthogonality, applying H\"{o}lder's inequality there, and subsequently returning to the original group via another application of orthogonality, see \S\ref{fouriersection} for a self-contained proof of this. On the other hand, all the objects involved in \eqref{lmd} are defined in the original group $G$, and so, it is natural to ask whether there is a purely combinatorial proof of Theorem \ref{main}, that is, whether there is a way to prove \eqref{lmd} without passing to a dual setting. The main aim of this note is to present a purely combinatorial proof of Theorem \ref{main} which does not rely on any Fourier analysis or spectral inequalities. Second, we will also record related results in non-abelian settings and for sumsets via different methods. 

Returning to our first goal, we observe that a simple, combinatorial proof can be written for Theorem \ref{main} when $d = 2^k$ for some $k \in \mathbb{N}$, see for example the $d=2$ case being illustrated below.

\begin{example} \label{inmymind}
For any $1 \leq i,j \leq 4$ and any $x\in G$, let 
\[r_{i,j}(x) = \sum_{a,b,y \in G} w_i(a) w_j(b) \nu(y) \1_{a+b + y =x} \ \ \text{and} \ \  r_{i,j}'(x) = \sum_{a,b,y \in G} w_i(a) w_j(b) \nu(y) \1_{a-b + y =x} \]
Then two applications of Cauchy's inequality and double-counting give us 
\begin{align*}
    E_{4, \nu}(w_1, \dots, w_4)  &  = \sum_{x \in G} r_{1,2}(x) r_{3,4}(x) \leq  (\sum_{x \in G} r_{1,2}(x)^2 )^{1/2}(\sum_{x \in G} r_{3,4}(x)^2 )^{1/2} \\
    & = (\sum_{x \in G} r_{1,1}'(x) r_{2,2}'(x) )^{1/2}(\sum_{x \in G} r_{3,3}'(x) r_{4,4}'(x) )^{1/2} \\
    & \leq \prod_{1 \leq i \leq 4} (\sum_{x \in G} r_{i,i}'(x)^2 )^{1/4} = \prod_{1 \leq i \leq 4}E_{4, \nu}(w_i)^{1/4}. 
\end{align*}
\end{example}

The above proof relies very strongly on the special symmetries that one obtains when $d = 2^k$ for some $k \in \mathbb{N}$. It turns out that the appropriate framework for generalising this for arbitrary $d \in \mathbb{N}$ requires proving a discrete convexity extension type inequality. In order to state the latter, we define
  \begin{equation} \label{hdbleh}
        H_d = \{ (a_1, \dots, a_d) \in \Z^d: a_1, ..., a_d \geq 0  \  \text{and} \ a_1 + \dots + a_d = d\}. 
    \end{equation}
Moreover, for every $1 \leq i \leq d$, we write $\vec{e}_i = (e_{i,1}, \dots, e_{i,d})$ to be the vector satisfying $e_{i,i} = 1$ and $e_{i,j} = 0$ for all $1 \leq j \leq d$ with $j \neq i$. In particular, $\{\vec{e}_1, \dots, \vec{e}_d\}$ forms the canonical basis of $\mathbb{Z}^d$.

\begin{Proposition} \label{q1}
    Let $d$ be a positive integer and let $f : H_d \to [0, \infty)$ be a function such that for any $\vec{u}, \vec{v} \in H_d$ satisfying $(\vec{u}+\vec{v})/2 \in H_d$, we have 
    \begin{equation} \label{disc}
    f( (\vec{u}+\vec{v})/2 ) \leq (f(\vec{u}) + f(\vec{v}))/2. 
    \end{equation}
    Then for any $\vec{u} = (u_1, \dots, u_d) \in H_d$, we have
\begin{equation} \label{lvd}
f(\vec{u}) \leq \frac{1}{d} \sum_{i=1}^d u_i f(d \vec{e}_i).  
\end{equation}
\end{Proposition}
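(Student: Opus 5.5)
The plan is to subtract off the linear interpolant and run a maximum-principle argument on the finite set $H_d$. Define $L(\vec{u}) = \frac{1}{d}\sum_{i=1}^d u_i f(d\vec{e}_i)$ and $g(\vec{u}) = f(\vec{u}) - L(\vec{u})$ for $\vec{u} \in H_d$. Since $L$ is linear, $L((\vec{u}+\vec{v})/2) = (L(\vec{u})+L(\vec{v}))/2$ whenever $(\vec{u}+\vec{v})/2 \in H_d$. Hence $g$ inherits the midpoint inequality \eqref{disc}. Moreover $g(d\vec{e}_i) = f(d\vec{e}_i) - f(d\vec{e}_i) = 0$ for every $i$. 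The goal \eqref{lvd} is then exactly the statement that $g \leq 0$ on $H_d$.

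Since $H_d$ is finite, $M = \max_{H_d} g$ is attained. Among all maximisers of $g$, I choose $\vec{u}$ with $\|\vec{u}\|_2^2 = \sum_i u_i^2$ as large as possible. I claim that $\vec{u}$ is a vertex $d\vec{e}_k$. Suppose not. Then, because the coordinates are nonnegative integers summing to $d$, there are two indices $i \neq j$ with $u_i, u_j \geq 1$. Put $\vec{v} = \vec{u} + \vec{e}_i - \vec{e}_j$ and $\vec{v}' = \vec{u} - \vec{e}_i + \vec{e}_j$. Both lie in $H_d$: the coordinate sums are unchanged, and the only decreased coordinates are $u_j - 1 \geq 0$ and $u_i - 1 \geq 0$. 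Their midpoint is $\vec{u} \in H_d$. The midpoint inequality for $g$ gives
\[ M = g(\vec{u}) \leq \tfrac{1}{2}\bigl(g(\vec{v}) + g(\vec{v}')\bigr) \leq M, \]
so $g(\vec{v}) = g(\vec{v}') = M$, and both are maximisers.

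On the other hand, the parallelogram identity gives
\[ \|\vec{v}\|_2^2 + \|\vec{v}'\|_2^2 = 2\|\vec{u}\|_2^2 + 2\|\vec{e}_i - \vec{e}_j\|_2^2 = 2\|\vec{u}\|_2^2 + 4, \]
so one of $\vec{v}, \vec{v}'$ has strictly larger squared norm than $\vec{u}$. This contradicts the choice of $\vec{u}$. Therefore $\vec{u} = d\vec{e}_k$ for some $k$, and $M = g(d\vec{e}_k) = 0$. Thus $g \leq 0$ everywhere, which is \eqref{lvd}.

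I do not expect a real obstacle. The one point needing care is choosing the perturbation direction $\vec{e}_i - \vec{e}_j$ so that both $\vec{u} \pm (\vec{e}_i - \vec{e}_j)$ stay in $H_d$, which is why I require two coordinates that are each at least $1$. Using the strictly convex tie-breaker $\|\cdot\|_2^2$ avoids any induction on $d$ or on the support size of $\vec{u}$.
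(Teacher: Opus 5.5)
Your proof is correct and follows the paper's route. Both subtract the linear interpolant to get $g$ with $g(d\vec{e}_i)=0$, take a maximiser of $g$ on the finite set $H_d$, and apply the midpoint inequality at $\vec{u} \pm (\vec{e}_i - \vec{e}_j)$ to produce further maximisers until a vertex is reached. The difference is only bookkeeping: the paper repeatedly shifts mass from coordinate $i$ to $j$ to shrink the support, while your tie-breaker maximising $\|\vec{u}\|_2^2$ (via the parallelogram identity) folds that iteration into a single extremal contradiction, which is slightly cleaner.
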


This can be interpreted as extending a discrete midpoint convexity of the form \eqref{disc} to global discrete convexity of the form \eqref{lvd}. Such results are classical if we replace the discrete setting of $H_d$ by the continuous setting of $\mathbb{R}^d$. Furthermore, while there has been much work surrounding the topic of discrete convex analysis, see \cite{MMTT2020} and the references therein, we were unable to find a previous reference for Proposition \ref{q1}.

Due to the combinatorial nature of our argument, one might wonder whether it extends to the non-abelian setting where one may not have access to Fourier analysis.  While unfortunately this is not the case, we can extend a specific version of Theorem \ref{main} to arbitrary finite, non-abelian groups $G$ by employing spectral inequalities in the spirit of the Fourier analytic argument of \S\ref{fouriersection}.  In order to begin this discussion, we first have to define what is a suitable analogue of $E_{2d,\nu}(w_1, \dots, w_{2d})$ in the non-abelian setting, since there are multiple ways to generalise this. One such generalisation is obtained by defining, for any group $G$ and for all functions $w_1, \dots, w_{2d} : G \to [0, \infty)$ with finite, non-empty supports, the quantity
\[ S_{2d}(w_1, \dots, w_{2d}) = \sum_{a_1, \dots, a_{2d} \in G} w_1(a_1) \dots w_{2d}(a_{2d}) \1_{a_1 a_2^{-1}a_3 a_4^{-1} \dots a_{2d-1}a_{2d}^{-1} = 1}. \]
Here, we denote the group operation in $G$ by the dot product and the identity element in $G$ by $1$. We further define $S_{2d}(w_i) = S_{2d}(w_i, \dots, w_i)$ for all $1 \leq i \leq 2d$. Note that when $G$ is abelian, this is precisely $E_{2d, \nu}(w_1, \dots, w_{2d})$ with  $\nu = \1_{\{0\}}$. As previously mentioned, the trick recorded in Example \ref{inmymind} allows us to prove an analogue of Theorem \ref{main} for $S_{2d}(w_1, \dots, w_{2d})$ when $d = 2^k$ for some $k \in \mathbb{N}$, see for example \cite[Lemma 2.6]{Mu2024} or \cite[Lemma 10]{Shk2021}, but this does not seem to extend to arbitrary $d \in \mathbb{N}$. 

It turns out that for all $d \in \mathbb{N}$ and any finite group $G$, we can interpret $S_{2d}(w_1, \dots, w_{2d})$ as a normalised variant of a graph norm for cycles of even length, see \eqref{astro}. The latter type of objects have been well-studied, in part due to their connections to an important question in graph theory known as Sidorenko's conjecture, see work of Hatami \cite{Ha2010}. Following the circle of ideas in \cite[\S2]{Ha2010}, we see that the aforementioned graph norms for cycles of even length can be written as a trace of a product of $2d$ matrices, which, in spirit, feels very similar to the orthogonality relation described in \eqref{wcw}. At this point, one can apply a combination of von Neumann's trace inequality and H\"{o}lder's inequality for Schatten norms to bound the aforementioned trace by a product of Schatten norms of the individual matrices; this step being very similar to \eqref{wcw2}. Finally we rewrite the individual Schatten norms via the trace formula as the desired quantity $S_{2d}(w_i, \dots, w_i)$. This process delivers the following result.


\begin{theorem} \label{nonab}
    For any $d \in \mathbb{N}$, any finite group $G$ and any functions $w_1, \dots, w_{2d} : G \to [0, \infty)$ with non-empty support, we have
    \[ S_{2d}(w_1, \dots, w_{2d}) \leq \prod_{i=1}^{2d} S_{2d}(w_i)^{1/2d}. \]
\end{theorem}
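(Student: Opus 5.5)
We follow the spectral route sketched just before the statement. To each $w_i$ we attach the $|G|\times|G|$ matrix $M_i$, indexed by $G\times G$, with entries $M_i(x,y)=w_i(x^{-1}y)$. Note that $x^{-1}y$ ranges over all of $G$, so $M_i$ is a convolution (Cayley-type) matrix rather than a symmetric one. Its transpose is $M_i^{T}(x,y)=w_i(y^{-1}x)$.

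The first step is to express $S_{2d}$ as a trace. Consider
\[ \operatorname{tr}(M_1 M_2^{T} M_3 M_4^{T}\cdots M_{2d-1}M_{2d}^{T}) = \sum_{x_0,\dots,x_{2d-1}} w_1(x_0^{-1}x_1)\,w_2(x_2^{-1}x_1)\,w_3(x_2^{-1}x_3)\cdots w_{2d}(x_0^{-1}x_{2d-1}), \]
with indices cyclic, so $x_{2d}=x_0$. We substitute $a_1=x_0^{-1}x_1$, $a_2=x_2^{-1}x_1$, $a_3=x_2^{-1}x_3$, and so on. Then
\[ a_1a_2^{-1}a_3a_4^{-1}\cdots a_{2d-1}a_{2d}^{-1}=x_0^{-1}x_{2d}=1 .\]
Conversely, every tuple $(a_i)$ satisfying this relation arises from exactly $|G|$ choices of $x_0,\dots,x_{2d-1}$: once $x_0$ is fixed, $x_1,\dots,x_{2d-1}$ are determined, and the closing condition is exactly the relation. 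Hence
\[ \operatorname{tr}(M_1M_2^{T}\cdots M_{2d-1}M_{2d}^{T})=|G|\,S_{2d}(w_1,\dots,w_{2d}). \]
This is the graph-norm identity referred to in \eqref{astro}. I would check the orientation of the alternating inverses carefully here, since this bookkeeping is where an error could creep in.

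The second step applies the same identity with every $w_i$ equal to a single $w$. This gives
\[ |G|\,S_{2d}(w)=\operatorname{tr}\big((MM^{T})^d\big)=\|M\|_{S_{2d}}^{2d}, \]
where $\|\cdot\|_{S_{p}}$ is the Schatten $p$-norm, because the eigenvalues of $MM^{T}$ are the squared singular values of $M$. In particular $S_{2d}(w)\ge 0$, so taking $2d$-th roots is legitimate.

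The third step bounds the trace. By von Neumann's trace inequality together with Hölder's inequality for singular values,
\[ |\operatorname{tr}(A_1\cdots A_{2d})|\le \sum_j \prod_i \sigma_j(A_i)\le \prod_{i=1}^{2d}\|A_i\|_{S_{2d}}. \]
This holds because the singular values of a product are majorised (log-majorised) by the products of the singular values of the factors. We apply it with $A_i=M_i$ or $M_i^{T}$, noting that transposition preserves Schatten norms. Combining the three steps gives
\[ |G|\,S_{2d}(w_1,\dots,w_{2d})\le \prod_i (|G|\,S_{2d}(w_i))^{1/2d}, \]
and the factors of $|G|$ cancel, which proves the theorem.

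The main obstacle is the multi-factor Hölder inequality for Schatten norms. I would derive it from Horn's inequality $\prod_{j\le k}\sigma_j(A_1\cdots A_m)\le\prod_{j\le k}\prod_i\sigma_j(A_i)$, combined with Weyl's majorant theorem, which gives $\sum_j|\lambda_j(P)|\le\sum_j\prod_i\sigma_j(A_i)$ for the product $P=A_1\cdots A_{2d}$. Finishing then only needs the scalar Hölder inequality with $2d$ exponents all equal to $2d$. Alternatively, one can cite the standard generalized Hölder inequality for Schatten norms directly.
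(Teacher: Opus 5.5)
Your proposal is correct and follows the paper's proof essentially step for step. You use the same matrices $(A_i)_{x,y}=w_i(x^{-1}y)$, the same cycle bijection giving ${\rm Tr}(A_1A_2^T\cdots A_{2d-1}A_{2d}^T)=|G|\,S_{2d}(w_1,\dots,w_{2d})$, the same identity $\|A_i\|_{2d}^{2d}={\rm Tr}((A_iA_i^T)^d)=|G|\,S_{2d}(w_i)$, and the same von Neumann/Schatten--H\"older bound, where your derivation via Horn's inequality and weak majorisation simply spells out the standard fact the paper cites.
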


We remark that analysing additive energies via spectral inequalities is a well--trodden path in additive combinatorics, see work of Shkredov on the higher energy method \cite{Shk2013a, Shk2013}. Another relationship between additive energy and traces of certain matrices appears in the breakthrough work of Guth--Maynard, see \cite[\S2]{GM2024}.

We will now record a related estimate on the sumset side. In order to motivate this, let $c \in \mathbb{R}$ and $d, N\in \mathbb{Z}$ satisfy $c,d,N \geq 1$. Moreover, let $A_1, \dots, A_{d}$ be finite, non-empty sets in some abelian group such that $|A_i| =N$ and
\[E_{2d}(A_i) = \sum_{a_1, \dots, a_{2d} \in A_i} \1_{\sum_{i=1}^d (a_i - a_{i+d})=0} \leq N^{2d - c} \]
 for all $1 \leq i \leq d$. We also define the sumsets
\[ A_1 + \dots + A_d = \{ a_1 + \dots + a_d: a_i \in A_i \ \  ( 1\leq i \leq d)\}  \ \ \text{and} \ \ dA_i = \{a_1 + \dots + a_d : a_1, \dots, a_d \in A_i \}.\]
Then Theorem \ref{main} implies that
\[ E_{2d}(A_1, \dots, A_d, A_1, \dots, A_d) = \sum_{a_1, a_1' \in A_1} \dots \sum_{a_d, a_d' \in A_d} \1_{\sum_{i=1}^d (a_i - a_i') = 0} \leq \prod_{1 \leq i \leq d}  E_{2d}(A_i)^{1/d} \leq N^{2d - c}.\]
One may now apply Cauchy's inequality to deduce that
\begin{equation} \label{ohtosee}
    |A_1 + \dots + A_d| \geq \frac{|A_1|^2 \dots |A_d|^2}{E_{2d}(A_1, \dots, A_d, A_1, \dots, A_d)} \geq N^c.
\end{equation}

It is natural to ask if such an estimate can be derived from the weaker hypothesis that $|dA_i| \geq N^c$ for all $1 \leq i \leq d$.  This is indeed weaker since as in the preceding discussion, one sees that Cauchy's inequality combined with the hypothesis $E_{2d}(A_i) \leq N^{2d -c}$ immediately delivers the bound $|dA_i| \geq N^c$. Such an extension fails spectacularly. Indeed, from work of Ruzsa \cite{Ru1992}, we know that there exists some constant $c_0>0$ along with arbitrarily large sets $A \subseteq \mathbb{Z}$ such that 
\[ |A-A| \leq |A|^{2- c_0} \ \ \text{and} \ \ |A+A| \gg |A|^2, \]
where we write $A-A = \{ a- b : a,b \in A\}$. Setting $A_1 = A$ and $A_2 = -A$, we see that
\[ |A_1 + A_2| = |A-A| \leq |A|^{2 - c_0} \ \ \text{while} \ \ |A_1| = |A_2| = |A| \ \ \text{and} \ \ |2A_1|, |2A_2| \gg |A|^2. \]
Despite this counterexample, one can prove the following weaker inequality.

\begin{theorem} \label{prapp}
    Let $d\geq 2$, let $G$ be some abelian group, let $A_1, \dots, A_d$ be finite, non-empty subsets of $G$. Then
    \[ |A_1 + \dots + A_d| \geq (|d A_1| \dots |d A_d|)^{1/2d}. \]
\end{theorem}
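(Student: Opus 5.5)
The plan is to prove, for each fixed $i$, an upper bound for $|dA_i|$ of the shape $|S|^{d}/|B_i|^{d-1}$, where
\[ S = A_1 + \dots + A_d \quad \text{and} \quad B_i = \sum_{j \neq i} A_j. \]
Multiplying these bounds over $i$ then reduces the problem to a lower bound for $\prod_i |B_i|$ in terms of $|S|$ alone.

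\textbf{Step 1: the bound for a single $i$.} Since $S = A_i + B_i$, we have $|A_i + B_i| = K_i |B_i|$ with $K_i = |S|/|B_i|$. The Pl\"{u}nnecke--Ruzsa inequality gives $|dA_i| \leq K_i^d |B_i|$. (In the Petridis form it gives a nonempty $X \subseteq B_i$ with $|X + dA_i| \leq K_i^d |X| \leq K_i^d|B_i|$, and $|dA_i| \leq |X + dA_i|$.) Hence
\[ |dA_i| \leq \frac{|S|^d}{|B_i|^{d-1}} \quad (1 \leq i \leq d), \qquad \text{so} \qquad \prod_{i=1}^d |dA_i| \leq \frac{|S|^{d^2}}{\bigl(\prod_{i=1}^d |B_i|\bigr)^{d-1}}. \]
The target inequality $\prod_i |dA_i| \leq |S|^{2d}$ therefore follows once we know
\[ \Bigl(\prod_{i=1}^d |B_i|\Bigr)^{d-1} \geq |S|^{d^2 - 2d}. \]

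\textbf{Step 2: lower bound for $\prod_i |B_i|$.} I would prove the Gyarmati--Matolcsi--Ruzsa type estimate
\[ |S|^{d-1} \leq \prod_{i=1}^d |B_i|, \]
which bounds a full $d$-fold sumset by its $(d-1)$-fold subsums. Given this, $\bigl(\prod_i |B_i|\bigr)^{d-1} \geq |S|^{(d-1)^2} = |S|^{d^2-2d+1} \geq |S|^{d^2-2d}$, which completes the argument with a factor $|S|$ to spare.

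\textbf{Proving Step 2 by induction.} I would deduce it from the Pl\"{u}nnecke-type inequality
\[ |X + Y + Z|\,|Z| \leq |X+Z|\,|Y+Z|, \]
which is itself a consequence of Petridis' lemma applied with base set $Z$. The induction runs on $d$, the case $d=2$ being trivial.
\begin{enumerate}[(a)]
\item For $d = 3$, take $X = A_1$, $Y = A_2$, $Z = A_3$. This gives $|S|\,|A_3| \leq |A_1 + A_3|\,|A_2 + A_3|$. Multiplying by $|S| \leq |A_1 + A_2|\,|A_3|$ is not enough on its own. One instead averages the three choices of which set plays the role of $Z$, or uses the symmetric Gyarmati--Matolcsi--Ruzsa argument directly.
\item For general $d$, take $Z$ to be the sum of $d-2$ of the sets and $X, Y$ the two remaining sets. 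Then combine with the induction hypothesis applied to $Z$ and to its enlargements.
\end{enumerate}
If a clean induction proves awkward, the slack of one power of $|S|$ means a weaker bound suffices, namely $\prod_i |B_i| \geq |S|^{d-2+1/(d-1)}$. This weaker bound may be reachable by cruder iterations of the displayed inequality combined with the trivial bounds $|S| \leq |B_i|\,|A_i|$.

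\textbf{Main obstacle.} I expect Step 2 to be the hard part, namely carrying out the iterated Pl\"{u}nnecke argument so that the exponents close up correctly. Step 1 is a direct application of Pl\"{u}nnecke--Ruzsa.
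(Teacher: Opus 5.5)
Your Step 1 is correct (it is Lemma~\ref{pr2} applied with base set $B_i$), and so is the reduction to $\bigl(\prod_i |B_i|\bigr)^{d-1} \geq |S|^{d^2-2d}$. The gap is in Step 2. The inequality $|X+Y+Z|\,|Z| \leq |X+Z|\,|Y+Z|$, on which both your induction and your fallback rest, is false. Petridis' lemma with base set $Z$ only gives a nonempty $Z' \subseteq Z$ with $|Z'+X+Y|\,|Z| \leq |X+Z|\,|Y+Z|$, and hence $|X+Y|\,|Z| \leq |X+Z|\,|Y+Z|$. You cannot replace $Z'$ by $Z$ on the left. Concretely, in $\mathbb{F}_2^3$ take $Z=\{0,e_1,e_2,e_1+e_2,e_3\}$, $X=\{0,e_1\}$ and $Y=\{0,e_2\}$. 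Then $|X+Z|=|Y+Z|=6$, while $X+Y+Z=\mathbb{F}_2^3$, so $|X+Y+Z|\,|Z|=40>36$. So the step you yourself flag as the crux is not established, and part (b) of the induction is not specified in any case.

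The target of Step 2 is nevertheless true in every abelian group: it is the Gyarmati--Matolcsi--Ruzsa inequality, and it has a short proof that uses no Pl\"{u}nnecke at all.
\begin{enumerate}[(i)]
\item Fix total orders on the $A_i$. Let $R\subseteq A_1\times\dots\times A_d$ consist of the lexicographically least representation of each $s\in S$, so $|R|=|S|$.
\item Let $\pi_{\hat i}$ delete coordinate $i$. Suppose $\vec{a},\vec{a}'\in R$ have equal coordinate sums off position $i$. Replacing the $i$-th coordinate of $\vec{a}'$ by $a_i$ gives another representation of the same element as $\vec{a}$. Minimality then gives $\pi_{\hat i}(\vec{a})\leq_{\mathrm{lex}}\pi_{\hat i}(\vec{a}')$, because the two tuples agree at position $i$.
\item By symmetry the two projections are equal, so $|\pi_{\hat i}(R)|\leq|B_i|$.
\item The discrete Loomis--Whitney inequality $|R|^{d-1}\leq\prod_i|\pi_{\hat i}(R)|$ now gives $|S|^{d-1}\leq\prod_i|B_i|$.
\end{enumerate}

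With this replacement your argument is complete, and it is genuinely different from the paper's. The paper applies Lemma~\ref{pr2} with the prefix sums $A_1+\dots+A_{j-1}$ as base sets, chains the bounds down to $|A_1|$, and averages over cyclic shifts. It therefore needs no lower bound on the $(d-1)$-fold subsums. Your route needs the Loomis--Whitney input, but yields $|S|\geq\bigl(\prod_i|dA_i|\bigr)^{1/(2d-1)}$. This beats the paper's intermediate exponent $(1-(1-1/d)^{d-1})/d$ for $d\leq 3$, but not for $d\geq 4$.
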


This can be deduced in a straightforward manner by iteratively applying the Pl\"{u}nnecke--Ruzsa inequality, see \cite{Pe2012} for a nice proof of the latter. While Theorem \ref{prapp} is trivial when $d=2$, it certainly delivers  non-trivial bounds when $d \geq 3$ and $|dA_i| \geq |A_i|^{c} = N^{c}$ for some $c >2$. For instance, for every $k \in \mathbb{N}$ and for all finite, non-empty sets $A,A_1, \dots, A_k \subseteq \mathbb{R}$, define the product sets
\[ A^{(k)} = \{a_1 \dots a_k : a_1, \dots, a_k \in A\} \ \ \text{and} \ \ A_1 \cdots A_k = \{ a_1 \dots a_k : a_i \in A_i \ (1 \leq i \leq k) \}. \]
Now suppose $|A_i| = N$ for all $1 \leq i \leq k$, for some $N \in \mathbb{N}$. Recent work of the author \cite{Mu2024b} on the sum-product phenomenon combined with the breakthrough work of Gowers--Green--Manners--Tao \cite{GGMT2025} on the polynomial Freiman--Ruzsa conjecture implies that there exists some absolute constant $c'>0$ such that the estimate
\begin{equation} \label{brs}
\max\{|kA_i|, |A_i^{(k)}|\} \gg_k N^{c'( \log k)^{1/8} } 
\end{equation}
holds for all $1 \leq i \leq k$.  Let $S \subseteq \{1,2,\dots, k\}$ be the set of indices $i$ such that $|kA_i|$ achieves the maximum in \eqref{brs} and let $T = \{1,2,\dots, k\} \setminus S$. Note that either $|S| \geq k/2$ or $|T| \geq k/2$. In the former case, we may apply Theorem \ref{prapp} and \eqref{brs} to get that
\[ |A_1 + \dots + A_k| \geq (|kA_1| \dots |kA_k|)^{1/2k} \geq \prod_{i \in S} |kA_i|^{1/2k} \gg_k N^{c'(\log k)^{1/8}|S|/(2k)} \geq N^{c'(\log k)^{1/8}/4}.  \]
One may similarly prove that if $|T| \geq k/2$, then $|A_1 \cdots A_k| \gg_k N^{c'( \log k)^{1/8}/4}$. We record this as the following corollary.

\begin{Corollary} \label{brs2}
    There exists an absolute constant $c''>0$ such that for any $k, N \in \mathbb{N}$ and any finite, non-empty sets $A_1, \dots, A_k \subseteq \mathbb{R}$ of cardinality $N$, one has
\begin{equation} \label{without}
\max\{|A_1 + \dots + A_k|, |A_1  \cdots A_k|\} \gg_k    N^{ c''(\log k)^{1/8}}. 
\end{equation}
    \end{Corollary}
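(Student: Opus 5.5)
The corollary follows by the case split sketched just before its statement. The inputs are the estimate \eqref{brs}, which combines \cite{Mu2024b} with \cite{GGMT2025}, Theorem \ref{prapp} in the additive group $(\mathbb{R},+)$, and Theorem \ref{prapp} again in the multiplicative group $(\mathbb{R}_{>0},\cdot)$.

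The case $k=1$ is trivial, since $N \geq N^{c''\cdot 0}$, so take $k\geq 2$. For each $i$, \eqref{brs} gives $\max\{|kA_i|,|A_i^{(k)}|\}\gg_k N^{c'(\log k)^{1/8}}$. Let $S$ be the set of indices for which the additive term attains the maximum, and let $T=\{1,\dots,k\}\setminus S$; one of them has size at least $k/2$.

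\emph{Case $|S|\geq k/2$.} Apply Theorem \ref{prapp} and use $|kA_i|\geq 1$ for $i\notin S$:
\[ |A_1+\dots+A_k|\geq \prod_{i\in S}|kA_i|^{1/2k}\gg_k N^{c'(\log k)^{1/8}|S|/2k}\geq N^{c'(\log k)^{1/8}/4}. \]

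\emph{Case $|T|\geq k/2$.} Here we want the multiplicative analogue. The group $\mathbb{R}^\times$ is abelian, but the sets $A_i$ may contain $0$, and Theorem \ref{prapp} needs subsets of a group. I would handle this by replacing each $A_i$ with $A_i'=A_i\setminus\{0\}$. Then $|A_i'|\geq N-1$, and $(A_i')^{(k)}\supseteq A_i^{(k)}\setminus\{0\}$, so $|(A_i')^{(k)}|\geq |A_i^{(k)}|-1$. Also $A_1'\cdots A_k'\subseteq A_1\cdots A_k$.

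Theorem \ref{prapp} in the group $\mathbb{R}^\times$ then gives
\[ |A_1\cdots A_k|\geq \prod_{i\in T}|(A_i')^{(k)}|^{1/2k}\gg_k N^{c'(\log k)^{1/8}/4}. \]
The loss of one element is harmless: once $N$ is large in terms of $k$, we have $|A_i^{(k)}|-1\geq \tfrac12|A_i^{(k)}|$. For bounded $N$ the statement is absorbed into the implied constant. If $N=1$ the bound holds trivially, since both sides are $\gg_k 1$.

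Taking $c''=c'/4$ gives \eqref{without}. The only step needing care is the zero-element issue in the multiplicative case, together with confirming that the $\gg_k$ constants from \eqref{brs} survive raising to the power $|S|/2k$ or $|T|/2k$. Both are routine.
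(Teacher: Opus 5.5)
Your proposal is correct and follows the paper's argument: split the indices into $S$ and $T$ via \eqref{brs}, then apply Theorem~\ref{prapp} in the additive or the multiplicative group to get $c''=c'/4$. Your handling of $k=1$ (where Theorem~\ref{prapp} needs $d\geq 2$) and of $0$ by passing to $A_i\setminus\{0\}$ in $\mathbb{R}^\times$ fills in details the paper covers with ``one may similarly prove'' — though your opening line should say $\mathbb{R}^\times$, not $\mathbb{R}_{>0}$, since the $A_i$ may contain negative numbers.
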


It is worth noting that \cite[Theorem 1.4]{Mu20204c} can be put together with inequality \eqref{brs} to get so-called \emph{low-energy decompositions} with unbounded power savings over $\mathbb{R}$, which can then be combined with Theorem \ref{main} and \eqref{ohtosee} to obtain a quantitatively weaker version of inequality \eqref{without}; see \cite{Mu2024a} for variations of this over $\mathbb{Q}$. For quantitatively stronger sum-product estimates of the form \eqref{brs} over $\mathbb{Q}$, see \cite{PZ2020}, for significant generalisations of such lower bounds to the setting of $1$-dimensional algebraic groups over $\mathbb{C}$, see \cite{HMS2026}, and finally, see \cite{BSSZ2026} for recent groundbreaking upper bound constructions towards the sum-product problem over $\mathbb{R}$.



\subsection*{Outline}
We use \S\ref{fouriersection} to describe the Fourier analytic proof of Theorem \ref{main}. In \S\ref{rep1},
we will provide the combinatorial proof of Theorem \ref{main} along with the proof of Proposition \ref{q1} in \S\ref{rep1}. We prove Theorem \ref{nonab} in \S\ref{rep2} and Theorem \ref{prapp} in \S\ref{prapp2}.

\subsection*{Notation}
We employ Vinogradov notation, that is, we write $Y \ll_{z} X$, or equivalently $Y =O_z(X)$, to mean that $|Y| \leq C_z X$, where $C_z>0$ is some constant depending on the parameter $z$. For every natural number $k \geq 2$ and for every non-empty, finite set $Z$, we use $|Z|$ to denote the cardinality of $Z$. We write $Z^k = \{ (z_1, \dots, z_k)   :   z_1, \dots, z_k \in Z\}$ and we use boldface to denote vectors $\vec{z} = (z_1, z_2, \dots, z_k) \in Z^k$. 

\subsection*{Acknowledgements} 
The author would like to thank Anurag Sahay and Ben Green for helpful comments. The author is supported by a Leverhulme Early Career Fellowship \texttt{ECF-2025-148}. Copilot was used in the process of proving Proposition \ref{q1}; all other ideas were generated by the author. This article is written entirely by the author.


\section{A Fourier analytic proof of Theorem \ref{main}} \label{fouriersection}

    For every $f: G \to \mathbb{C}$, let ${\rm supp}(f)$ denote the support of $f$. Since the set $X = {\rm supp}(\nu) \cup (\cup_{1 \leq i \leq 2d} {\rm supp}(w_i))$ is finite, we can reduce to the case when $G$ is the group generated by $X$. Now, since $G$ is finitely generated, it is isomorphic to some group
    \[ H = \mathbb{Z}^N \times (\mathbb{Z}/N_1 \mathbb{Z}) \times \dots \times (\mathbb{Z}/N_r\mathbb{Z}),\]
    where $N, r, N_1, \dots, N_r \in \mathbb{Z}$ satisfy $N, r\geq 0$ and $N_1, \dots, N_r \geq 1$. Note that additive relations are preserved under isomorphisms, and so, we may assume that $G=H$.  At this point, we can do Fourier analysis, and so, we consider the following isomorphic copy of the dual group
    \[ H^* = \mathbb{T}^N \times (\mathbb{Z}/N_1 \mathbb{Z}) \times \dots \times (\mathbb{Z}/N_r\mathbb{Z}). \]
    Define the measure $\mu$ on $H$ to satisfy
    \[ \int_H f(\vec{x}) d\mu =  \sum_{n_1 \in \mathbb{Z}/N_1\mathbb{Z}} \dots \sum_{n_r \in \mathbb{Z}/N_r\mathbb{Z}} \ \sum_{\vec{k} \in \mathbb{Z}^N} f(\vec{k}, n_1, \dots, n_r) \]
    for all finitely-supported functions $f: H \to \mathbb{C}$. Given $\vec{x} = (\vec{k}, n_1, \dots, n_r) \in H$ and $\vec{\xi} = (\vec{\eta}, m_1, \dots, m_r ) \in H^*$, we define
    \[ \vec{x} \cdot \vec{\xi} = \vec{k} \cdot \vec{\eta} + n_1m_1/N_1 + \dots + n_r m_r/N_r\]
    with $\vec{k} \cdot \vec{\eta}$ denoting the usual dot product in $\mathbb{R}^N$. Moreover, for any finitely-supported function $f: H \to \mathbb{C}$, define its Fourier transform $\hat{f}: H^* \to \mathbb{C}$ as
    \[ \widehat{f}(\vec{\xi}) = \int_H f(\vec{x}) e(\vec{x} \cdot \vec{\xi}) d\mu,  \]
    where we write $e(\theta) = e^{2 \pi i\theta}$ for all $\theta \in \mathbb{R}$. Furthermore, define the measure $\mu^*$ on $H^*$ to satisfy
    \[ \int_{H^*} g(\vec{\xi}) d\mu^* =   (N_1 \dots N_r)^{-1}\sum_{m_1 \in \mathbb{Z}/N_1\mathbb{Z}} \dots \sum_{m_r \in \mathbb{Z}/N_r\mathbb{Z}} \int_{\mathbb{T}^N} g(\vec{\eta}, m_1, \dots, m_r)  d\vec{\eta} \]
    for all bounded functions $g : H^* \to \mathbb{C}$. Orthogonality now implies that
    \begin{align} \label{wcw} 
   E_{2d,\nu}(w_1, \dots, w_{2d})  & =  \int_{H} \dots \int_H w_1(\vec{a}_1) \dots w_{2d}(\vec{a}_{2d}) \nu(\vec{y})\nu(\vec{y}')  \1_{\sum_{i=1}^{d}(\vec{a}_i - \vec{a}_{i+d}) = \vec{y} - \vec{y}'} d\mu(a_1) \dots d\mu(y') \nonumber \\
    & = \int_{H^*} \widehat{w_1}(\vec{\xi})\dots \widehat{w_d}(\vec{\xi}) \overline{\widehat{w_{d+1}}(\vec{\xi}) \dots \widehat{w_{2d}}(\vec{\xi})} |\widehat{\nu}(\vec{\xi})|^2 d\mu^* .
    \end{align}
     H\"{o}lder's inequality gives us the bound
    \begin{equation}  \label{wcw2} 
    E_{2d, \nu}(w_1, \dots, w_{2d}) \leq \prod_{1 \leq i \leq 2d} \bigg(  \int_{H^*} |\widehat{w_i}(\vec{\xi})|^{2d} |\widehat{\nu}(\vec{\xi})|^2 d\mu^* \bigg)^{1/2d}.
    \end{equation}
    Applying orthogonality for each of the individual integrals on the right hand side  delivers the desired bound.


\section{A combinatorial proof of Theorem \ref{main}} \label{rep1}

We begin by presenting our proof of Theorem \ref{main} that utilises Proposition \ref{q1}.

\begin{proof}[Proof of Theorem \ref{main}]
After renormalising both sides of the inequality in \eqref{lmd}, we may assume that $w_i$ satisfies $\sum_{x \in G} w_i(x)^2 =1$ for every $1 \leq i \leq 2d$, and that $\sum_{x \in G} \nu(x)^2 = 1$. For any $x \in G$, let 
\[ r(x) = \sum_{z \in G}\sum_{a_1, \dots, a_d \in G} \nu(z) w_1(a_1) \dots w_d(a_d) \1_{x = a_1 + \dots + a_d + z} \]
and
\[ r'(x) =  \sum_{z' \in G}\sum_{a_1', \dots, a_d' \in G} \nu(z') w_{d+1}(a_1') \dots w_{2d} (a_d') \1_{x = a_1' + \dots + a_d'+ z'}\]
 We can apply Cauchy's inequality to deduce that
\begin{align} \label{slp3}
E_{2d, \nu}(w_1, \dots, w_{2d}) & = \sum_{x \in G} r(x) r'(x) \leq (\sum_{x \in G} r(x)^2 )^{1/2}  (\sum_{x \in G} r'(x)^2 )^{1/2} \\
& = E_{2d, \nu}(w_1, \dots, w_d,w_1, \dots, w_d)^{1/2} E_{2d, \nu}(w_{d+1}, \dots, w_{2d}, w_{d+1}, \dots,w_{2d})^{1/2} \nonumber . 
\end{align}
Thus, it suffices to prove that
\begin{equation} \label{symcase}
    E_{2d,\nu}(w_1, \dots, w_d, w_1, \dots, w_d)  \leq \prod_{1\leq i \leq d} E_{2d,\nu}(w_i)^{1/d}. 
\end{equation}
Let $H_d$ be defined as in \eqref{hdbleh} and let $\vec{u} = (u_1, \dots, u_d) \in H_d$. For this choice of $\vec{u}$, let $\Gamma_{\vec{u}}$ denote the weighted number of solutions to the equation
\begin{equation} \label{slp1}
\sum_{i=1}^{d} (x_i - y_i)  + z = z',
\end{equation}
where for any $1 \leq i \leq d$, we count $x_i, y_i$ with weights $w_j(x_i) w_j(y_i)$ with $j$ being the smallest number such that $i \leq u_1 + \dots + u_j$, and where we count $z,z' \in G$ with weights $\nu(z) \nu(z')$. In particular, for each $1 \leq j \leq d$, there are precisely $2u_j$ variables listed in \eqref{slp1} associated with the weight function $w_j$. For example, observe that
\begin{equation} \label{circuit1}
E_{2d,\nu}(w_1, \dots, w_d, w_1, \dots, w_d) = \Gamma_{(1,\dots,1)} \ \ \text{and} \ \ E_{2d,\nu}(w_i) = \Gamma_{d\vec{e}_i}. 
\end{equation}
Moreover, note that $\Gamma_{\vec{u}} \geq 1$ for every $\vec{u} \in H_d$, since we can always let $x_1, \dots, x_d, z$ have any admissible value and let $y_i = x_i$ for all $1 \leq i \leq d$ and $z' = z$, in which case, we get that
\[ \Gamma_{\vec{u}} \geq (\sum_{z \in G} \nu(z)^2 ) \prod_{1 \leq i \leq d} (\sum_{x \in G} w_i(x)^2 )^{u_i} = 1.  \]

We now claim that for any $\vec{u}, \vec{v} \in H_d$ satisfying $(\vec{u} + \vec{v})/2 \in H_d$, we have
\begin{equation} \label{cauchyschwarz1}
\Gamma_{(\vec{u} + \vec{v})/2 }  \leq \Gamma_{\vec{u}}^{1/2} \Gamma_{\vec{v}}^{1/2} 
\end{equation}
Indeed, let $\vec{z} = (\vec{u} + \vec{v})/2 \in H_d$. Let $1\leq i \leq d$ satisfy $z_i \neq 0$. Let $1 \leq T \leq d$ be such that 
\begin{equation} \label{sp4}
x_{T}, x_{T+1}, \dots, x_{T + z_i-1},y_{T}, y_{T+1}, \dots, y_{T + z_i - 1} 
\end{equation}
are precisely the $2z_i$ variables appearing in equation \eqref{slp1}  which are counted by $\Gamma_{\vec{z}}$ with the weights $w_i(x_T) \dots w_i(y_{T + z_i -1})$. Let the first $u_i$ variables listed in \eqref{sp4} remain on the left hand side in \eqref{slp1} and move the remaining $2z_i - u_i = v_i$ variables in \eqref{slp1} to the right hand side in  \eqref{slp1}. On the other hand, if $1 \leq i \leq d$ is an index such that $z_i =0$, then we must have that $u_i = v_i = 0$ since $u_i, v_i \geq 0$ and $u_i + v_i = 2z_i$. In this case there is nothing to be done. After doing the above procedure for each $1 \leq i \leq d$ and  performing a suitable relabelling, we see that $\Gamma_{\vec{z}}$ counts the number of solutions to the equation
\[ t+ (-1)^{h_1}a_1 + \dots + (-1)^{h_d} a_d = t' + (-1)^{h_1'} b_1 + \dots  + (-1)^{h_d'} b_d \]
where for every $1 \leq i \leq d$ we count $a_i, b_i \in G$ with weights $w_j(a_i)$ and $w_{j'}(b_i)$, with $j$ and $j'$ being the smallest integers such that 
\begin{equation} \label{slp4}
i \leq u_1 + \dots + u_j \ \ \text{and} \ \ i \leq v_1 + \dots + v_{j'}, \end{equation} and where $(h_1, \dots, h_d), (h_1', \dots, h_d')\in \{0,1\}^d$ are some fixed vectors depending on $\vec{u}, \vec{v}$, and where $t,t' \in G$ are counted with weights $\nu(t) \nu(t')$. Applying Cauchy's inequality as in \eqref{slp3}, we see that
\[ \Gamma_{\vec{z}} \leq \Gamma'^{1/2} \Gamma''^{1/2}, \]
where $\Gamma'$ counts solutions to the equation
\[ t_1 +  (-1)^{h_1} a_1 + \dots  + (-1)^{h_d} a_d  =  (-1)^{h_1} a_1' + \dots  + (-1)^{h_d} a_d' + t_2\]
with $a_i,a_i' \in G$ weighted by $w_j(a_i) w_j(a_i')$, where $j$ is the smallest integer satisfying the first condition in \eqref{slp4}, and with $t_1,t_2$ being counted with weights $\nu(t_1) \nu(t_2)$. One can define $\Gamma''$ similarly. We first consider $\Gamma'$, wherein, upon rearranging and relabelling the variables suitably depending on whether $h_i = 0$ or $h_i =1$ for every $1 \leq i \leq d$, we see that $\Gamma'$ is precisely $\Gamma_{\vec{u}}$. One can similarly analyse $\Gamma''$ and prove that $\Gamma'' = \Gamma_{\vec{v}}$. This finishes the proof of our claim.

Now, define the function $f: H_d \to \mathbb{R}$ by setting
\[
f(\vec{u}) = \log \Gamma_{\vec{u}} \]
for every $\vec{u} \in H_d$. We see that $f(\vec{u}) \geq 0$ for all $\vec{u} \in H_d$ since $\Gamma_{\vec{u}} \geq 1$ for all $\vec{u} \in H_d$. Moreover, our claim recorded in \eqref{cauchyschwarz1} is equivalent to the fact that the function $f$ satisfies the condition described in \eqref{disc}. Applying Proposition \ref{q1} gives us the bound
\[ \log E_{2d,\nu}(w_1, \dots, w_{d}, w_1, \dots, w_{d}) =  f(1,\dots, 1) \leq \sum_{1 \leq i \leq d} \frac{f(d \vec{e}_i)}{d} = \sum_{1 \leq i \leq d} \frac{\log E_{2d,\nu}(w_i)}{d} , \]
which is precisely the estimate claimed in \eqref{symcase}.
\end{proof}

We now record the proof of Proposition \ref{q1}.

\begin{proof}
Proposition \ref{q1} is trivially true when $d =1$, and so, we may assume that $d \geq 2$. For any $(u_1, \dots, u_d) \in H_d$, let 
\[ g(u_1, \dots, u_d) = f(u_1, \dots, u_d) - \frac{1}{d} \sum_{i = 1}^d u_i f(d\vec{e}_i). \]
Note that $g(d \vec{e}_i) = 0$ for all $1 \leq i \leq d$. Moreover, for any $\vec{u}, \vec{v} \in H_d$ such that $(\vec{u} + \vec{v})/2 \in H_d$, our hypothesis recorded in \eqref{disc} implies that
\begin{equation} \label{obs}
g((\vec{u} + \vec{v})/2) \leq (g(\vec{u}) + g(\vec{v}))/2.
\end{equation}
We see that proving  \eqref{lvd} is equivalent to proving that $g(\vec{u}) \leq 0$ for all $\vec{u} \in H_d$. We prove the latter via contradiction, and so, suppose there exists some $\vec{u}' \in H_d$  such that $g(\vec{u}') >0$. Since $H_d$ is finite, there exists some $\vec{z} \in H_d$ such that $g(\vec{z}) \geq g(\vec{u})$ for all $\vec{u} \in H_d$. This implies that $g(\vec{z}) \geq g(\vec{u}') > 0$. For any $\vec{v} \in H_d$, let ${\rm supp}(\vec{v})$ denote the set of indices $1 \leq j \leq d$ such that $v_j >0$. Now, fix some $i \in {\rm supp}(\vec{z})$. Our main claim here is that there exists some $\vec{z}' \in H_d$ such that 
\begin{equation} \label{mainclaim}
    g(\vec{z}') = g(\vec{z}) \ \ \text{and} \ \ {\rm supp}(\vec{z}') \subsetneq {\rm supp}(\vec{z}) \ \ \text{and} \ \ z_i' = 0
\end{equation}
This can be iteratively applied to deduce that there exists some $1 \leq j \leq d$ such that $g(d \vec{e}_j) \geq g(\vec{u})$ for all $\vec{u} \in H_d$. This means that $0 = g(d\vec{e}_j) \geq g(\vec{u}') > 0$, dispensing the desired contradiction. Thus, we will now proceed to prove \eqref{mainclaim}.

In order to prove this, observe that since $g(d\vec{e}_j) = 0$ for all $1 \leq j \leq d$, we must have ${\rm supp}(\vec{z}) \neq \{i\}$. Thus, there exists $j \in {\rm supp}(\vec{z}) \setminus \{i\}$ such that the vectors $\vec{v} = \vec{z} - \vec{e}_i + \vec{e}_j$ and $\vec{v}'= \vec{z} + \vec{e}_i - \vec{e}_j$ satisfy $\vec{v}, \vec{v}' \in H_d$. Note that $(\vec{v} + \vec{v}')/2 = \vec{z}$, and so, we may apply \eqref{obs} to deduce that
\[ g(\vec{z}) \leq (g(\vec{v}) + g(\vec{v}'))/2. \]
On the other hand, since $\vec{v}, \vec{v}' \in H_d$, we get that $g(\vec{z}) \geq g(\vec{v}), g(\vec{v}')$, whence the above inequality implies that $g(\vec{z}) = g(\vec{v}) = g(\vec{v}')$. Note that $v_i < z_i$ and that $g(\vec{v}) = g(\vec{z})$. If $v_i = 0$, then we may set $\vec{z}' = \vec{v}$ and we would be done because $v_j > z_j >0$ and $v_k = z_k$ for all $k \in \{1,2,\dots, d\} \setminus \{i,j\}$. Otherwise, we repeat the above argument with $\vec{z}$ replaced by $\vec{v}$. Since $z_i \leq d$, we can repeat this argument at most $d$ times before we obtain some $\vec{z}' \in H_d$ satisfying \eqref{mainclaim}. This finishes the proof of Proposition \ref{q1}.
\end{proof}


\section{Proof of Theorem \ref{nonab}}   \label{rep2}

Suppose $G$ is a finite group and
\begin{equation} \label{blackkeys1}
a_1 a_2^{-1}a_3 a_4^{-1} \dots a_{2d-1} a_{2d}^{-1} = 1
\end{equation}
holds for some $a_1, \dots, a_{2d} \in G$. Let $x_0 \in G$, and define  $x_{2i-1} = x_{2i-2} a_{2i-1}$ and $x_{2i} = x_{2i-1} a_{2i}^{-1}$ for every $1 \leq i \leq d$. Then we have that $x_{2d} = x_0 (a_1 a_2^{-1} \dots a_{2d-1} a_{2d}^{-1}) = x_0$. In particular, considering a  bipartite graph on $G \times G$ where $(x,y) \in G \times G$ lies in the edge set $E$ if $y = xa$ for some $a \in G$, we see that 
\begin{equation} \label{blackkeys2}
(x_0,x_1), (x_2, x_1), (x_2, x_3), \dots, (x_{2d-2}, x_{2d-1}), (x_{0}, x_{2d-1}) \in E . 
\end{equation}
Thus, this set of edges forms a cycle of length $2d$ in the above Cayley graph. In fact, there is a bijection between all $(2d+1)$-tuples $(x_0, a_1, \dots, a_{2d}) \in G^{2d+1}$ such that $a_1, \dots, a_{2d}$ satisfy \eqref{blackkeys1}, with the set of all cycles of length $2d$ described in \eqref{blackkeys2}. Indeed, for any such cycle, we can define $a_{2i-1} = x_{2i-2}^{-1} x_{2i-1}$ and $a_{2i} = x_{2i}^{-1} x_{2i-1}$ for all $1 \leq i \leq d$ with $x_{2d}=x_0$. This means that
\begin{align} \label{astro}
|G|S_{2d}&(w_1, \dots, w_{2d})  = \sum_{x_0 \in G} \sum_{a_1, \dots, a_{2d} \in G} w_1(a_1) \dots w_{2d}(a_{2d}) \1_{a_1 a_2^{-1}a_3 a_4^{-1} \dots a_{2d-1}a_{2d}^{-1} = 1} \nonumber \\
& = \sum_{x_0,x_1, \dots, x_{2d-1} \in G} w_1(x_0^{-1} x_1) w_2(x_2^{-1}x_1) \dots w_{2d-1}(x_{2d-2}^{-1} x_{2d-1}) w_{2d}(x_0^{-1} x_{2d-1}).
\end{align}
Writing the right hand side in \eqref{astro} to be $\Lambda(w_1, \dots, w_{2d})$, our proof of Theorem \ref{nonab} reduces to proving the following lemma.

\begin{lemma} \label{vsch}
    Let $d \in \mathbb{N}$, let $G$ be a finite group, and let $w_1, \dots, w_{2d} : G \to [0, \infty)$ be functions. Then 
    \[ \Lambda(w_1, \dots, w_{2d}) \leq \prod_{1 \leq i \leq 2d} \Lambda(w_i, \dots, w_i)^{1/2d}. \]
\end{lemma}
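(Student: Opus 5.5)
We will write $\Lambda$ as the trace of a product of $|G|\times|G|$ matrices and then apply a Hölder inequality for Schatten norms. For each $i$, define the real matrix $M_i$ indexed by $G\times G$ with entries $M_i(x,y) = w_i(x^{-1}y)$. This is the weighted biadjacency matrix of the Cayley-type bipartite graph in \eqref{blackkeys2}, with edge weights $w_i$.

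The first step is to check that
\[ \Lambda(w_1,\dots,w_{2d}) = \mathrm{tr}\big(M_1 M_2^{T} M_3 M_4^{T}\cdots M_{2d-1}M_{2d}^{T}\big). \]
Expanding the trace gives a sum over $x_0,\dots,x_{2d-1}$ of the product $M_1(x_0,x_1)\,M_2^T(x_1,x_2)\cdots M_{2d}^T(x_{2d-1},x_0)$. Since $M_{2i}^T(x_{2i-1},x_{2i}) = w_{2i}(x_{2i}^{-1}x_{2i-1})$, this product is exactly the summand in \eqref{astro}.

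The second step applies the generalised Hölder inequality for Schatten norms, which follows from von Neumann's trace inequality together with Hölder's inequality on singular values:
\[ |\mathrm{tr}(A_1\cdots A_{2d})| \leq \prod_{i=1}^{2d} \|A_i\|_{S_{2d}}. \]
Transposition preserves singular values, so this gives $\Lambda(w_1,\dots,w_{2d}) \leq \prod_i \|M_i\|_{S_{2d}}$. Because $\Lambda$ is nonnegative, the absolute value causes no trouble.

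The third step identifies each factor. We have
\[ \|M_i\|_{S_{2d}}^{2d} = \mathrm{tr}\big((M_iM_i^T)^d\big) = \mathrm{tr}\big(M_iM_i^TM_i M_i^T\cdots M_iM_i^T\big), \]
because $M_iM_i^T$ is positive semidefinite and its eigenvalues are the squared singular values of $M_i$. By the first step with all $w_j = w_i$, this trace equals $\Lambda(w_i,\dots,w_i)$. Substituting this into the second step gives the lemma.

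The main obstacle is the second step: stating and justifying the multi-factor Schatten–Hölder inequality cleanly. I would first use von Neumann's inequality in the form $|\mathrm{tr}(AB)| \leq \sum_k \sigma_k(A)\sigma_k(B)$. I would then control the singular values of products by Horn's majorisation, $\prod_{k\le m}\sigma_k(A_1\cdots A_n)\le \prod_{k\le m}\prod_j \sigma_k(A_j)$, which implies weak majorisation of the sums and hence $|\mathrm{tr}(A_1\cdots A_{2d})| \leq \sum_k \prod_j \sigma_k(A_j)$. Finally I would apply Hölder's inequality for sequences with exponents all equal to $2d$. Alternatively, I would cite the standard Hölder inequality for Schatten classes directly, which is the route suggested in the discussion before Theorem \ref{nonab}.
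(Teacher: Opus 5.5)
Your proposal is correct and follows essentially the same route as the paper. The paper likewise sets $(A_i)_{j,k}=w_i(x_j^{-1}x_k)$, identifies $\Lambda(w_1,\dots,w_{2d})$ with ${\rm Tr}(A_1A_2^T\cdots A_{2d-1}A_{2d}^T)$, bounds this by $\prod_i\|A_i\|_{2d}$ via von Neumann's trace inequality and H\"older for Schatten norms, and rewrites $\|A_i\|_{2d}^{2d}={\rm Tr}((A_iA_i^T)^d)=\Lambda(w_i,\dots,w_i)$; your Horn-majorisation sketch is a valid standard justification of the multi-factor Schatten--H\"older step that the paper simply cites.
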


We note that Lemma \ref{vsch} is a special case of \cite[Corollary 2.6]{Ha2010}, but we present its proof here since it is quite short and since it highlights the similarities between the spectral approach and the Fourier analytic argument from \S\ref{fouriersection}. In this endeavour, we record some notation and preliminaries. Thus given some $n \times n$ matrix $A$ with real entries, we define $A^T$ to be its transpose, and for every $1\leq j,k \leq n$, we denote $A_{j,k}$ to be the entry in the $j^{\rm th}$ row and $k^{\rm th}$ column of $A$. We will also write ${\rm Tr}(A)$ to denote the trace of $A$. Given real number $p \in [1, \infty)$ we denote $\| A\|_{p}$ to be the $p^{\rm th}$ Schatten norm of $A$, that is, 
\[ \| A\|_{p} = (\sum_{1 \leq i \leq n} \sigma_i(A)^p )^{1/p} \]
where $\sigma_1(A), \dots, \sigma_n(A) \geq 0$ are the singular values of $A$. When $d \in \mathbb{N}$, we have
\[ \| A\|_{2d} = ({\rm Tr}((A A^T)^d  ) )^{1/2d}. \]
 Given $n \times n$ matrices $A_1, \dots, A_{2d}$ with real entries, we can apply von Neumann's trace inequality and H\"{o}lder's inequality for Schatten norms to get that
\begin{equation} \label{vnmshcatten}
|{\rm Tr}(A_1 A_2^T \dots A_{2d-1} A_{2d}^T)| \leq \| A_1 A_2^T \dots A_{2d-1} A_{2d}^T \|_1 \leq \prod_{1\leq i \leq 2d} \|A_i\|_{2d}.
\end{equation} 
We are now ready to prove Lemma \ref{vsch}.

\begin{proof}[Proof of Lemma \ref{vsch}]
Let $G = \{x_1, \dots, x_n\}$ for some $n \in \mathbb{N}$. For every $1 \leq i \leq 2d$, define the $n \times n$ matrix $A_i$ by letting $(A_i)_{j,k} = w_i(x_j^{-1} x_k)$ for every $1 \leq j,k \leq n$. 
 Let $A = A_1 A_2^T\dots A_{2d-1} A_{2d}^T$. Note that $A$ is an $n \times n$ matrix and for any $1 \leq j, k \leq n$, one has
\[ A_{j,k} = \sum_{1 \leq i_1, \dots, i_{2d-1} \leq n}   w_1(x_j^{-1} x_{i_1}) w_2( x_{i_2}^{-1} x_{i_1})\dots w_{2d-1}(x_{i_{2d-2}}^{-1} x_{i_{2d-1}}) w_{2d}(x_k^{-1}x_{i_{2d-1}}) .\]
Combining this with \eqref{astro} gives us
\begin{equation} \label{nbd1}
    {\rm Tr}(A_1 A_2^T \dots A_{2d-1} A_{2d}^T) = {\rm Tr}(A) = \sum_{j=1}^n A_{j,j} = \Lambda(w_1, \dots, w_{2d}). 
\end{equation}
Finally, applying \eqref{vnmshcatten}, we obtain the desired conclusion 
\begin{align*}
    |\Lambda(w_1, \dots, w_{2d})| & = |{\rm Tr}(A_1 A_2^T \dots A_{2d-1}A_{2d}^T)| \leq \prod_{1 \leq i \leq 2d} \|A_i\|_{2d} \\
    & = \prod_{1 \leq i \leq 2d} ({\rm Tr}((A_i A_i^T)^d))^{1/2d} = \prod_{1 \leq i \leq 2d} \Lambda(w_i, \dots, w_i)^{1/2d}.  \qedhere
\end{align*}
\end{proof}


\section{Proof of Theorem \ref{prapp}} \label{prapp2}

We will need the following consequence of the Pl\"{u}nnecke--Ruzsa inequality, see \cite[Corollary 6.28]{TV2006}. 

\begin{lemma} \label{pr2}
    Let $K \geq 1$ and let $A,B$ be finite, non-empty subsets of some abelian group such that $|A+B| \leq K|A|$. Then for any $n \in \mathbb{N}$ one has
    \[ |nB| \leq K^n |A|. \]
\end{lemma}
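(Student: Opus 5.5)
The plan is to prove Lemma \ref{pr2} via Petridis' approach to the Pl\"{u}nnecke--Ruzsa inequality, as in \cite{Pe2012}.

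\textbf{Choosing a minimiser.} Among all non-empty subsets $X \subseteq A$, pick one that minimises the ratio $|X+B|/|X|$, and write $K' = |X+B|/|X|$ for this minimum value. Taking $X = A$ in the minimisation shows $K' \leq |A+B|/|A| \leq K$.

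\textbf{Key lemma (Petridis).} The main step is to show that for every finite set $C$ in the group,
\[ |X + B + C| \leq K' |X + C|. \]
I will prove this by induction on $|C|$. The case $|C| = 1$ is just the definition of $K'$, since translating by a single element does not change cardinalities.

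For the inductive step, write $C' = C \cup \{c\}$ with $c \notin C$, and set
\[ Y = \{x \in X : x + B + c \subseteq X + B + C\}. \]
\begin{itemize}
\item Every element of $X+B+C'$ lies in $X+B+C$ or in $(X\setminus Y)+B+c$. Hence
\[ |X+B+C'| \leq |X+B+C| + |X+B| - |Y+B|. \]
\item Since $Y \subseteq X$, we have $|X+B| = K'|X|$ and, by minimality, $|Y+B| \geq K'|Y|$. (If $Y$ is empty this bound is trivial.)
\item Let $Z = \{x \in X : x + c \in X + C\}$. The set $X+C'$ is the disjoint union of $X+C$ and $(X\setminus Z)+c$, so
\[ |X+C'| = |X+C| + |X| - |Z|. \]
\item If $x \in Z$, then $x + c = x' + c''$ for some $x' \in X$ and $c'' \in C$. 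This gives $x + B + c = x' + B + c'' \subseteq X+B+C$, so $Z \subseteq Y$.
\end{itemize}
Combining these with the inductive hypothesis,
\[ |X+B+C'| \leq K'|X+C| + K'|X| - K'|Y| \leq K'\big(|X+C| + |X| - |Z|\big) = K'|X+C'|, \]
which completes the induction. The only real obstacle in the whole argument is this double-counting, namely checking the containment $Z \subseteq Y$ and the exact form of the two union decompositions; the rest is routine.

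\textbf{Iterating.} Apply the key lemma with $C = (n-1)B$ to get $|X + nB| \leq K'|X + (n-1)B|$. Inducting on $n$ then gives
\[ |X + nB| \leq K'^n |X|. \]

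\textbf{Conclusion.} Since $X$ is non-empty, translating $nB$ by any fixed $x \in X$ embeds it into $X + nB$. Therefore
\[ |nB| \leq |X + nB| \leq K'^n |X| \leq K^n |A|, \]
using $K' \leq K$ and $X \subseteq A$.
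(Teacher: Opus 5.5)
Your argument is correct and is Petridis' proof \cite{Pe2012}, which is what the paper relies on: it gives no proof of Lemma \ref{pr2}, citing \cite[Corollary 6.28]{TV2006} and pointing to \cite{Pe2012} for the Pl\"{u}nnecke--Ruzsa inequality. One small repair is needed in your first bullet: covering by $(X\setminus Y)+B+c$ only bounds the new elements by $|(X\setminus Y)+B|$, not by $|X+B|-|Y+B|$. Instead, note that any element of $X+B+C'$ outside $X+B+C$ lies in $(X+B+c)\setminus(Y+B+c)$, because $Y+B+c\subseteq X+B+C$, and this set has exactly $|X+B|-|Y+B|$ elements.
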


We are now ready to prove Theorem \ref{prapp}. For every $2 \leq j \leq d$, we define $K_j \geq 1$ as
\[ |A_1 +  \dots  + A_j| = K_j |A_1 + \dots + A_{j-1}| .\]
Lemma \ref{pr2} now implies that
\[ |dA_j| \leq K_j^d |A_1 + \dots + A_{j-1}|. \]
Thus, we have that
\[ |A_1 + \dots + A_j| \geq |dA_j|^{1/d} |A_1 + \dots + A_{j-1}|^{1 - 1/d}. \]
We now iteratively apply this to get that
\begin{align*}
|A_1 + \dots + A_d| 
& \geq |dA_d|^{1/d} |A_1 + \dots + A_{d-1}|^{1 - 1/d}  \\
& \geq |dA_d|^{1/d} |dA_{d-1}|^{(1 - 1/d)/d} |A_1 + \dots + A_{d-2}|^{ ( 1- 1/d)^2} \\
& \geq |dA_d|^{1/d} |dA_{d-1}|^{(1-1/d)/d} |d A_{d-2}|^{(1-1/d)^2 /d } \dots |dA_2|^{(1-1/d)^{d-2}/d} |A_1|^{(1-1/d)^{d-1}}
\end{align*}
Now, for all $1 \leq j \leq d$, define $\sigma_j : \{1,2,\dots, d\} \to \{1,2,\dots, d\}$ to be the permutation satisfying $\sigma_j(x) \equiv x+j \ ({\rm mod} \ d)$ for all $1 \leq x \leq d$. Since we are in an abelian group, we see that $A_1 + \dots + A_d = A_{\sigma_j(1)} + \dots + A_{\sigma_j(d)}$ for all $1 \leq j \leq d$. Thus, we get that
\begin{align*}
    |A_1 + \dots + A_d| & = \prod_{1 \leq j \leq d} |A_{\sigma_j(1)} + \dots + A_{\sigma_j(d)}|^{1/d} \\
    & \geq \prod_{1\leq i \leq d} |dA_i|^{\frac{1}{d^2}(1 + (1-1/d) + \dots + (1-1/d)^{d-2})} \\
    & = \prod_{1 \leq i \leq d} |dA_i|^{\frac{1-(1-1/d)^{d-1}}{d}}. 
\end{align*}
Since $\{(1-1/d)^{d-1}\}_{d=2}^{\infty}$ is a decreasing sequence, we have that  $(1-1/d)^{d-1} \leq 1/2$ for all $d \geq 2$. Combining this with the previous estimate finishes the proof of Theorem \ref{prapp}.


\end{document}